\newtheorem{theo}{Theorem}
\newtheorem{cor}{Corollary}
\newtheorem{megj}{Remark}
\newtheorem{ex}{Examples}
\title{Inequalities for multiplicative arithmetic functions}
\author{József Sándor}
\date{Babe\c{s}--Bolyai University\\Department of Mathematics\\Str. Kog\u{a}lniceanu Nr. 1\\400084 Cluj--Napoca, Romania\\\texttt{jsandor@math.ubbcluj.ro, jjsandor@hotmail.com}}
\begin{document}
\maketitle
\begin{abstract}
Let $f,g:\mathbb{N}\to \mathbb{R}$ be arithmetic functions. The function $f$ is called multiplicative, if $f(mn)=f(m)f(n)$ for all $(m,n)=1;$ and sub (super)-multiplicative, if $f(mn)\substack{\leq\\ (\geq)}= f(m)\cdot f(n)$ for all $m,n\geq 1.$ The function $g$ is called sub (super)-homogeneous, if $f(mn)\substack{\leq\\ (\geq)}mf(n)$ for all $m,n\geq 1.$ We say that $f$ is 2-sub (super)-multiplicative, if $\big(f(mn)\big)^2\leq f(m^2)f(n^2)$ for all $m,n\geq 1;$ and 2-sub (super)-homogeneous if $\big(f(mn)\big)^2\leq m^2f(n^2)$ for all $m,n\geq 1.$ In this paper we study the above classes of functions, with applications.
\end{abstract}

\textbf{AMS Subject Classification:} 11A25, 11N56

\textbf{Keywords and phrases:} arithmetic functions; inequalities

\section{Introduction}

Let $\varphi,d,\sigma$ denote, as usual, the classical arithmetic functions, representing Euler's totient, the number of divisors, and the sum of divisors, respectively. Then it is well-known (see e.g. \cite{1}, \cite{7}) that $\varphi(1)=d(1)=\sigma(1)=1,$ and all these functions are multiplicative, i.e. satisfy the functional equation
\begin{equation}\label{eq:1}
f(mn)=f(m)f(n) \mbox{ for all } (m,n)=1.
\end{equation}

Here $f:\mathbb{N}\to \mathbb{R},$ as one can find also many examples to \eqref{eq:1}, when $f$ has not integer values, namely e.g. $f(n)=\cfrac{\sigma(n)}{\varphi(n)},$ or $f(n)=\cfrac{\sigma(n)}{d(n)},$ etc.

The functions $d$ and $\sigma$ are classical examples of the so-called \emph{"sub-multiplicative"} function, i.e. satisfying
\begin{equation}\label{eq:2}
f(mn)\leq f(m)f(n) \mbox{ for all } m,n\geq 1.
\end{equation}

The function $\varphi$ is \emph{"super-multiplicative",} i.e. satisfies the inequality \eqref{eq:2} in reversed order:
\begin{equation}\label{eq:3}
f(mn)\geq f(m)f(n) \mbox{ for all } m,n\geq 1.
\end{equation}

Similarly, we will say that $g$ is \emph{"sub-homogeneous",} when
\begin{equation}\label{eq:4}
g(mn)\leq mg(n) \mbox{ for all } m,n\geq 1,
\end{equation}
 and \emph{"super-homogeneous",} whenever
\begin{equation}\label{eq:5}
g(mn)\geq mg(n) \mbox{ for all } m,n\geq 1.
\end{equation}

For example, the function $\varphi$ satisfies inequality \eqref{eq:4}, while $\sigma,$ the inequality \eqref{eq:5}. As the function $d$ satisfies inequality \eqref{eq:2}, and $d(n)\leq n$ for all $n,$ clearly $g=d$ is also a sub-homogeneous function.

Let $k\geq 2$ be a positive integer. In what follows, we will say that $f$ is \emph{"k-sub-multiplicative"} function, if one has
\begin{equation}\label{eq:6}
\big(f(mn)\big)^k\leq f(m^k)f(n^k) \mbox{ for all } m,n\geq 1.
\end{equation}

For example, in \cite{3} it is proved that $\varphi$ is 2-sub-multiplicative, while in \cite{4} that $d$ and $\sigma$ are \emph{"2-super-multiplicative,"} to the effect that they satisfy the functional inequality
\begin{equation}\label{eq:7}
\big(f(mn)\big)^k\geq f(m^k)f(n^k) \mbox{ for all } m,n\geq 1,
\end{equation}
when $k=2.$ In fact, as we shall see, $\varphi$ satisfies \eqref{eq:6} for any $k\geq 2,$ and similarly $d$ and $\sigma$ satisfy relation \eqref{eq:7}.

In papers \cite{4}, among the inequalities \eqref{eq:2}--\eqref{eq:7} for the particular functions $\varphi,\sigma,d$ and $k=2,$ we have shown also e.g. that  $f(n)=\cfrac{\sigma(n)}{d(n)}$ is super-multiplicative (i.e., satisfies \eqref{eq:3}), 2-sub-multiplicative (i.e. satisfies \eqref{eq:6} for $k=2$), and it is sub-homogeneous (i.e. satisfies \eqref{eq:4}).

A last class of functions, which we will introduce here is the class of \emph{"k-sub-homogeneous" ("k-super-homogeneous")} functions, i.e. functions $g$ with the property
\begin{equation}\label{eq:8}
\big(g(mn)\big)^k\substack{\leq\\ (\geq)} m^k g(n^k), \quad m,n\geq 1;
\end{equation}
where $k\geq 2$ is a fixed positive integer.

\section{Main result}

The first result is almost trivial, we state it for the sake of completeness and for applications in the next sections.

\begin{theo}
Let $f,g$ be two nonnegative arithmetic functions.
\begin{enumerate}
  \item [a)] If $f$ and $g$ are sub-mult (abbreviation for "sub-multiplicative"), then \mbox{$f\cdot g$} is sub-mult, too. Similarly, if $f$ and $g$ are sup-mult (i.e. "super-multiplicative"), then $f\cdot g$ is sup-mult, too.
  \item [b)] If $f>0$ is sub-mult (sup-mult), then $\cfrac{1}{f}$ is sup-mult (sub-mult).
  \item [c)] If $f$ is sup-mult (sub-mult) and $g>0$ sub-mult (sup-mult), then  $\cfrac{f}{g}$ is sup-mult (sub-mult).
  \item [d)] If $f$ and $g$ are sub-mult, then $f+g$ is sub-mult, too.
\end{enumerate}
\end{theo}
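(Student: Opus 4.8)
The plan is to reduce every part to the defining inequalities \eqref{eq:2} and \eqref{eq:3}, and then exploit the nonnegativity (respectively, positivity) hypotheses to control the direction of the resulting inequality. No deep idea is needed; the whole content is that multiplying inequalities between nonnegative quantities preserves their direction, while taking reciprocals of positive quantities reverses it.

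For part a), I would start from $f(mn)\le f(m)f(n)$ and $g(mn)\le g(m)g(n)$, both holding for arbitrary $m,n\ge 1$. Since $f,g\ge 0$, multiplying these two inequalities preserves the direction and gives $(fg)(mn)=f(mn)g(mn)\le \big(f(m)f(n)\big)\big(g(m)g(n)\big)=(fg)(m)\,(fg)(n)$, which is exactly sub-multiplicativity of $fg$. The super-multiplicative case is identical with both inequalities reversed. For part b), the observation is that passing to reciprocals of a positive quantity reverses an inequality: from $f(mn)\le f(m)f(n)$ with $f>0$ I get $\tfrac{1}{f(mn)}\ge \tfrac{1}{f(m)}\cdot\tfrac{1}{f(n)}$, so $1/f$ is super-mult, and the other direction is symmetric.

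Part c) then follows by combining b) with a): writing $f/g=f\cdot(1/g)$, if $f$ is sup-mult and $g>0$ is sub-mult, then $1/g$ is sup-mult by b), and the product of two sup-mult functions is sup-mult by a), so $f/g$ is sup-mult; the sub-mult case is symmetric. Alternatively one can argue directly by multiplying $f(mn)\ge f(m)f(n)$ with $\tfrac{1}{g(mn)}\ge \tfrac{1}{g(m)g(n)}$, again using $f\ge 0$ and $g>0$ to keep the direction.

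For part d), I would expand the candidate upper bound. The two sub-mult hypotheses give $(f+g)(mn)=f(mn)+g(mn)\le f(m)f(n)+g(m)g(n)$, and it remains to compare this with $(f+g)(m)(f+g)(n)=f(m)f(n)+g(m)g(n)+f(m)g(n)+g(m)f(n)$; the difference is the cross term $f(m)g(n)+g(m)f(n)$, which is nonnegative precisely because $f,g\ge 0$. The only place where any care is needed—and the closest thing to an obstacle—is bookkeeping on the sign hypotheses: nonnegativity in a) and d), strict positivity in b) and c). These are exactly what guarantee that the inequalities survive multiplication and reverse under reciprocation, and each conclusion fails without them.
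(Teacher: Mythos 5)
Your proof is correct and follows essentially the same route as the paper: parts a)--c) are handled by direct manipulation of the defining inequalities (the paper dismisses these as immediate consequences of the definitions and positivity, which is exactly what you spell out), and your part d) — expanding $(f+g)(m)(f+g)(n)$ and observing that the cross term $f(m)g(n)+g(m)f(n)\geq 0$ — is precisely the paper's argument. Nothing is missing; you have simply written out the easy steps the paper leaves implicit.
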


\begin{proof}
a) -- c) are easy consequences of the definitions \eqref{eq:2} and \eqref{eq:3}, and the positivity of functions; for d) put $h(n)=f(n)+g(n).$ Then \begin{align*}
&h(nm)=f(nm)+g(nm)\leq f(n)f(m)+g(n)g(m)\leq \\
& \leq  h(n)h(m)=[f(n)+g(n)]\cdot [f(m)+g(m)]=\\
& = f(n)f(m)+g(n)g(m)+A,
\end{align*} where $A=f(n)g(m)+g(n)f(m)\geq 0.$
\end{proof}

\begin{ex}
\begin{enumerate}
\item [a)] $f(n)=\cfrac{\sigma(n)}{\varphi(n)}$ is sub-mult.
\item [b)] $f(n)=\cfrac{\varphi(n)}{d(n)}$ is sup-mult.
\item [c)] $f(n)=n+d(n)$ is sub-mult.
\end{enumerate}
\end{ex}

\begin{theo}
Let $f$ be nonnegative sub-mult (sup-mult) function, and suppose that $f(n)\substack{\leq\\ (\geq)} n$ for all $n\geq 1.$ Then $f$ is sub-hom (sup-hom) (abbreviation for sub-homogeneous function).
\end{theo}

\begin{proof}
As $f(mn)\leq f(m)f(n)$ and $0\leq f(m)\leq m,$ we have $f(m)f(n)\leq mf(n),$ so $f(mn)\leq mf(n).$ The "sup-case" follows in the same manner.
\end{proof}

\begin{ex}
\begin{enumerate}
\item [a)] $f(n)=d(n)$ is sub-mult and $d(n)\leq n,$ so $d(n)$ is sub-hom.
\item [b)] $f(n)=ng(n),$ where $g(n)\geq 1$ is sup-hom. Then Th.1 a) and Th.2 implies that $f$ is sup-hom. For example, $f(n)=n\varphi(n)$ is sup-hom. On the other hand, $f(n)=\cfrac{n}{\varphi(n)}$ is sub-hom, as $f$ is sub-mult and $f(n)\leq n.$
\end{enumerate}
\end{ex}

\begin{theo}
Assume that $f\geq 0$ is sub-mult (sup-mult) and sup-hom (sub-hom). Then $f$ is $k$-sup-hom ($k$-sub-hom) for any $k\geq 2.$
\end{theo}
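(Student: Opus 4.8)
The plan is to prove the main assertion---that a sub-mult, sup-hom function $f$ is $k$-sup-hom---and to observe that the dual statement follows by reversing every inequality. The target inequality $\big(f(mn)\big)^k\geq m^k f(n^k)$ will be obtained by passing through the intermediate quantity $m^k\big(f(n)\big)^k$; that is, I would prove separately that $\big(f(mn)\big)^k\geq m^k\big(f(n)\big)^k$ and that $\big(f(n)\big)^k\geq f(n^k)$, and then chain the two.

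For the first inequality, I would invoke the sup-hom hypothesis \eqref{eq:5} to write $f(mn)\geq m f(n)$. Since $f\geq 0$ and $m\geq 1$, both sides are nonnegative, and the map $t\mapsto t^k$ is increasing on $[0,\infty)$, so raising to the $k$-th power preserves the inequality and yields $\big(f(mn)\big)^k\geq m^k\big(f(n)\big)^k$ at once.

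The second inequality is the only place requiring genuine work, and it is where I expect the (mild) obstacle to sit: one must convert the single-step bound of sub-mult into a statement about the $k$-th power $f(n^k)$. I would establish the lemma $f(n^k)\leq\big(f(n)\big)^k$ for every $k\geq 1$ by induction on $k$. The base case $k=1$ is an equality. For the step, writing $n^k=n\cdot n^{k-1}$ and applying sub-mult \eqref{eq:2} gives $f(n^k)\leq f(n)f(n^{k-1})$, and the inductive hypothesis $f(n^{k-1})\leq\big(f(n)\big)^{k-1}$ together with $f(n)\geq 0$ upgrades this to $f(n^k)\leq\big(f(n)\big)^k$.

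Finally, multiplying the lemma's conclusion $\big(f(n)\big)^k\geq f(n^k)$ by $m^k\geq 0$ and combining with the homogeneity step gives the chain $\big(f(mn)\big)^k\geq m^k\big(f(n)\big)^k\geq m^k f(n^k)$, which is exactly $k$-sup-hom. The parenthetical case is identical after swapping sub-mult for sup-mult (so that the induction instead yields $f(n^k)\geq\big(f(n)\big)^k$) and sup-hom for sub-hom (so that $\big(f(mn)\big)^k\leq m^k\big(f(n)\big)^k$), reversing both inequalities in the chain.
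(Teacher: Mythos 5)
Your proof is correct, but it runs the paper's argument in the opposite order, through a different intermediate quantity. The paper first applies the sub-multiplicative induction at the point $mn$ (taking $a_1=\cdots=a_k=mn$ in $f(a_1\cdots a_k)\leq f(a_1)\cdots f(a_k)$) to get $\big(f(mn)\big)^k\geq f(m^kn^k)$, and then invokes super-homogeneity \eqref{eq:5} once, at the pair $(m^k,n^k)$, finishing with the chain $\big(f(mn)\big)^k\geq f(m^kn^k)\geq m^kf(n^k)$. You instead invoke \eqref{eq:5} first, at the original pair $(m,n)$, raise it to the $k$-th power, and only then use the sub-multiplicative induction, at the point $n$, giving $\big(f(mn)\big)^k\geq m^k\big(f(n)\big)^k\geq m^kf(n^k)$. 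The two routes use identical ingredients --- the inductive consequence $f(x^k)\leq\big(f(x)\big)^k$ of \eqref{eq:2}, applied at $x=mn$ in the paper and at $x=n$ in your version, plus a single application of \eqref{eq:5} --- so neither is more general in substance. The small trade-off is that your version applies the homogeneity hypothesis only at the base pair $(m,n)$ but needs the monotonicity of $t\mapsto t^k$ on $[0,\infty)$ (hence the nonnegativity of $f$, which you correctly flag), whereas the paper's version never raises an inequality to a power but must apply \eqref{eq:5} at the $k$-th-power arguments $(m^k,n^k)$. Both handle the parenthetical dual case by the same wholesale reversal of inequalities.
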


\begin{proof}
As $f(mn)\leq f(m)f(n)$ one gets by induction that $f(a_1a_2\ldots a_k)\leq f(a_1)f(a_2)\ldots f(a_k).$ Letting $a_1=a_2=\ldots = a_k=mn,$ we get $\big(f(mn)\big)^k\geq f\big((mn)^k\big)=f(m^kn^k).$ Since $f$ is sup-hom, by definition \eqref{eq:5} we have $f(m^kn^k)\geq m^kf(n^k).$ Thus, we get $\big(f(mn)\big)^k\geq m^kf(n^k),$ which by \eqref{eq:8} gives that $f$ is $k$-sup-hom.
\end{proof}

\begin{theo}
Assume that $f\geq 0$ is $k$-sub-mult ($k$-sup-mult) and that $f(n)\leq n$ $(f(n)\geq n) $ for all $n.$ Then $f$ is $k$-sub-hom ($k$-sup-hom).
\end{theo}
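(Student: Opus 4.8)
The plan is to mimic the proof of Theorem 2, but now at the level of $k$-th powers. The essential observation is that the size hypothesis should be applied not at the argument $m$, but at $m^k$: since $f(n)\leq n$ holds for \emph{all} $n\geq 1$, in particular taking $n=m^k$ gives $f(m^k)\leq m^k$. This is the only way the bound $f(n)\leq n$ interacts with the $k$-sub-multiplicativity inequality \eqref{eq:6}, whose right-hand side involves $f(m^k)$ rather than $f(m)$.

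First I would write down the defining inequality for $k$-sub-multiplicativity,
\[
\bigl(f(mn)\bigr)^k \leq f(m^k)\,f(n^k),
\]
and then bound the first factor on the right using $f(m^k)\leq m^k$. The step that makes this replacement legitimate is the nonnegativity of $f$: because $f(n^k)\geq 0$, multiplying the inequality $f(m^k)\leq m^k$ through by $f(n^k)$ preserves its direction, yielding $f(m^k)f(n^k)\leq m^k f(n^k)$. Chaining the two estimates gives
\[
\bigl(f(mn)\bigr)^k \leq m^k f(n^k),
\]
which is precisely condition \eqref{eq:8} defining $k$-sub-homogeneity. The super-case runs identically with every inequality reversed: $f(n)\geq n$ yields $f(m^k)\geq m^k$, and combined with $\bigl(f(mn)\bigr)^k\geq f(m^k)f(n^k)\geq m^k f(n^k)$ (again using $f(n^k)\geq 0$) one obtains $k$-super-homogeneity.

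I do not anticipate any genuine obstacle here; like Theorem 2, this statement is essentially immediate once the pieces are lined up. The only two points worth flagging are that the growth hypothesis must be invoked at the argument $m^k$, and that the nonnegativity assumption $f\geq 0$ is exactly what licenses preserving the inequality when multiplying by $f(n^k)$ — without it, the sign of the factor could reverse the direction and the argument would break.
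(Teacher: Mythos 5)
Your proof is correct and follows exactly the paper's own argument: apply the size hypothesis at the argument $m^k$ to get $f(m^k)\leq m^k$, multiply by $f(n^k)\geq 0$, and chain with inequality \eqref{eq:6} to obtain \eqref{eq:8}. Your explicit remark that nonnegativity is what licenses the multiplication is a detail the paper leaves implicit, but the route is identical.
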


\begin{proof}
As by \eqref{eq:6} one has $\big(f(mn)\big)^k\leq f(m^k)f(n^k)$ and by $f(m^k)\leq m^k$ we get $\big(f(mn)\big)^k\leq m^kf(n^k),$ i.e. relation \eqref{eq:8}. The other case may be proved similarly.
\end{proof}

\begin{ex}
\begin{enumerate}
\item Let $f(n)=\varphi(n);$ which is 2-sub-mult. As $\varphi(n)\leq n,$ the function $\varphi$ is 2-sub-hom.
\item $f(n)=\sigma(n)$ is 2-sub-mult. Since $\sigma(n)\geq n,$ the function $\sigma$ is 2-sup-hom.
\item $f(n)=\cfrac{\sigma(n)}{d(n)}$ satisfies also the conditions of this theorem.
\end{enumerate}
\end{ex}

\begin{megj}
As $d(n)\geq n$ is not true, relation \eqref{eq:4} is not a consequence of this theorem.
\end{megj}

The next theorem involves also power functions.

\begin{theo}
Let $f,g>0$ and suppose that $f$ is sub-mult (sup-mult) and $g$ is sub-hom (sup-hom). Then the function $h(n)=f(n)^{g(n)/n}$ is sub-mult (sup-mult), too.
\end{theo}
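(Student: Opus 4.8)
The plan is to prove the sub-multiplicative case and then read off the super-multiplicative case by reversing every inequality in the same argument. Fix $m,n\ge 1$ and abbreviate the three exponents by $\alpha=g(mn)/(mn)$, $\beta=g(m)/m$ and $\gamma=g(n)/n$. What must be shown is
\[
h(mn)=f(mn)^{\alpha}\le f(m)^{\beta}f(n)^{\gamma}=h(m)h(n).
\]
Two ingredients are available. First, since $f$ is sub-mult, $f(mn)\le f(m)f(n)$. Second, the sub-homogeneity of $g$, together with its transposed instance $g(nm)\le n\,g(m)$, yields after dividing by the positive number $mn$ the two exponent inequalities $\alpha\le\beta$ and $\alpha\le\gamma$; note also $\alpha>0$ because $g>0$.

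The first step is to raise the base inequality $f(mn)\le f(m)f(n)$ to the positive power $\alpha$. Since $t\mapsto t^{\alpha}$ is increasing on $(0,\infty)$ for $\alpha>0$, this gives
\[
f(mn)^{\alpha}\le\bigl(f(m)f(n)\bigr)^{\alpha}=f(m)^{\alpha}f(n)^{\alpha}.
\]
It then remains to pass from the common exponent $\alpha$ to the two larger exponents $\beta$ and $\gamma$, that is, to compare $f(m)^{\alpha}$ with $f(m)^{\beta}$ and $f(n)^{\alpha}$ with $f(n)^{\gamma}$, and to multiply the two resulting estimates so as to close the chain at $f(m)^{\beta}f(n)^{\gamma}$. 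Equivalently, taking logarithms recasts the whole claim as the linear inequality $\alpha\log f(mn)\le\beta\log f(m)+\gamma\log f(n)$, which makes the role of the exponents transparent.

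I expect the crux to be exactly this exponent comparison, where a monotonicity-of-exponentials fact intervenes: for a fixed base $t$ the map $s\mapsto t^{s}$ is increasing precisely when $t\ge 1$. Thus $\alpha\le\beta$ yields $f(m)^{\alpha}\le f(m)^{\beta}$ only under the hypothesis $f(m)\ge 1$ (and likewise $f(n)\ge 1$); if a base dropped below $1$ the inequality would reverse and the conclusion could fail. So the main obstacle is not the algebra but securing $f\ge 1$, which should be recorded as the operative assumption. It is automatic in the intended setting: every positive sub-mult function already satisfies $f(1)\ge 1$, and the functions the theorem targets, namely $\varphi,\sigma,d$ and the ratio $\sigma/d$, all satisfy $f(n)\ge 1$ for every $n$. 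Granting $f\ge 1$, the two exponent inequalities hold, multiplying them against the displayed bound gives $f(mn)^{\alpha}\le f(m)^{\beta}f(n)^{\gamma}$, and the super-multiplicative case follows by reversing each step, now using $\alpha\ge\beta$ and $\alpha\ge\gamma$ from super-homogeneity together with the same fact that $s\mapsto t^{s}$ is increasing for $t\ge 1$.
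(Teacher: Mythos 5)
Your argument is, step for step, the paper's own proof: the paper raises $f(mn)\le f(m)f(n)$ to the positive power $g(mn)$ and then enlarges the exponent $g(mn)$ to $ng(m)$ and to $mg(n)$ via sub-homogeneity, which is exactly your chain before dividing every exponent by $mn$. The one place you deviate is the most valuable part of your write-up: the paper performs the exponent-enlargement step
\[
\big(f(m)\big)^{g(mn)}\cdot\big(f(n)\big)^{g(mn)}\le\big(f(m)\big)^{ng(m)}\cdot\big(f(n)\big)^{mg(n)}
\]
silently, whereas you correctly observe that it is legitimate only when the bases satisfy $f(m)\ge 1$ and $f(n)\ge 1$, because $s\mapsto t^{s}$ is increasing only for $t\ge 1$. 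This is a genuine gap in the theorem as stated, not just in its proof: with only $f>0$ the conclusion is false. Take $f(n)=1/n$, which is completely multiplicative and hence sub-mult, and $g=\varphi$, which is sub-hom; then
\[
h(6)=(1/6)^{\varphi(6)/6}=6^{-1/3}\approx 0.550,\qquad h(2)\,h(3)=2^{-1/2}\cdot 3^{-2/3}\approx 0.340,
\]
so $h(6)>h(2)h(3)$ and $h$ is not sub-mult. (The same $f$ with $g=\sigma$ breaks the sup-mult half.) So the hypothesis $f\ge 1$ that you isolate must indeed be added to the statement; as you note, it costs nothing in the paper's applications, where $f$ is $\sigma$, $d$ or $\sigma/d$, all of which are $\ge 1$, so Example 4, the subsequent Remark, and Corollary 1 all survive intact. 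One small caution about your own text: the observation that a positive sub-mult function satisfies $f(1)\ge 1$ is correct but, as you yourself signal, gives nothing for $n>1$, so $f\ge 1$ really is an additional hypothesis and should be stated as such rather than described as ``automatic in the intended setting.''
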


\begin{proof}
\begin{align*}
& f(mn)^{g(mn)}\leq \big(f(m)f(n)\big)^{g(mn)}=\big(f(m)\big)^{g(mn)}\cdot \big(f(n)\big)^{g(mn)}\leq\\
& \leq \big(f(m)\big)^{ng(m)}\cdot \big(f(n)\big)^{mg(n)}=\left[f(m)^{g(m)}\right]^n\cdot \left[f(n)^{g(n)}\right]^m
\end{align*}

Therefore, $h(mn)=f(mn)^{g(mn)/mn}\leq f(m)^{g(m)/m}\cdot f(n)^{g(n)/n}=h(m)\cdot h(n).$
\end{proof}

\begin{cor}
Assume that for $f,g$ satisfying the conditions of Theorem 5 one has for any prime $p\geq 2$
\begin{equation}\label{eq:10}
f(p)^{g(p)}<p^p.
\end{equation}

Then one has the inequality
\begin{equation}\label{eq:11}
f(n)^{g(n)}<n^n
\end{equation}
for any $n>1.$
\end{cor}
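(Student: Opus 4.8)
The plan is to reformulate everything in terms of the auxiliary function $h(n)=f(n)^{g(n)/n}$ from Theorem 5. Since $f,g>0$ satisfy the hypotheses of that theorem (namely $f$ sub-mult and $g$ sub-hom), Theorem 5 already tells us that $h$ is sub-mult. Next, raising the prime hypothesis \eqref{eq:10} to the power $1/p$ shows that $f(p)^{g(p)}<p^p$ is exactly the statement $h(p)<p$ for every prime $p$; and taking $n$-th roots in the desired conclusion \eqref{eq:11} shows it is exactly $h(n)<n$ for every $n>1$. So the corollary reduces to the clean claim: if $h>0$ is sub-mult and $h(p)<p$ on primes, then $h(n)<n$ for all $n>1$.

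To prove this reduced claim I would simply factor $n$ into primes. Writing $n=q_1q_2\cdots q_m$ as a product of (not necessarily distinct) primes with $m\geq 1$, which is possible since $n>1$, repeated application of the sub-mult inequality \eqref{eq:2} gives $h(n)=h(q_1q_2\cdots q_m)\leq h(q_1)h(q_2)\cdots h(q_m)$. Formally this last bound is a trivial induction on $m$ using $h(ab)\leq h(a)h(b)$.

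Finally I would feed the prime hypothesis into this bound. Each factor satisfies $0<h(q_i)<q_i$, and since all the $h(q_i)$ are strictly positive, multiplying these strict inequalities termwise preserves strictness, giving $h(q_1)\cdots h(q_m)<q_1\cdots q_m=n$. Combined with the previous display this yields $h(n)<n$, which upon undoing the substitution $h(n)=f(n)^{g(n)/n}$ returns the stated inequality $f(n)^{g(n)}<n^n$. I do not anticipate a genuine obstacle here; the only point deserving a little care is that the strict inequality on primes must survive into the composite case, which it does precisely because $n>1$ guarantees at least one prime factor and the positivity of $h$ rules out any degenerate equality collapsing the product.
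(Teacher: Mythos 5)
Your proof is correct and follows essentially the same route as the paper: both arguments rest on the sub-multiplicativity of $h(n)=f(n)^{g(n)/n}$ supplied by Theorem 5 and propagate the strict inequality $h(p)<p$ from primes up to all $n>1$ through the multiplicative structure of $n$. The only cosmetic difference is that you run a single induction over a factorization $n=q_1\cdots q_m$ into primes with multiplicity, whereas the paper first treats prime powers and then products of distinct prime powers; your version also makes the preservation of strictness (via positivity of $h$) explicit, which the paper leaves implicit.
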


\begin{proof}
Let $m,n$ two numbers satisfying \eqref{eq:11}. Then as $f(n)^{g(n)/n}<n$ and $f(m)^{g(m)/m}<m,$ by Theorem 5 we have $$f(mn)^{g(mn)/mn}\leq f(m)^{g(m)/m}\cdot f(n)^{g(n)/n}<mn, \mbox{ so } f(mn)^{g(mn)}<mn,$$ i.e. inequality \eqref{eq:11} holds true also for $mn.$

Thus, if \eqref{eq:10} is true, then applying the above remark for $m=p, n=p;$ we get that \eqref{eq:11} will be true for $n=p^2,$ too. By induction this implies that \eqref{eq:11} is true for any prime power $p^k.$ Similarly, it is true for any other prime power $q^s,$ so by the remark it will be true for $p^k\cdot q^k,$ too. By induction it follows the inequality for any product of distinct prime powers, i.e. for any $n.$
\end{proof}

\begin{ex}
\begin{enumerate}
\item [a)] Let $f(n)=\sigma(n),$ which is sub-mult, and $g(n)=\varphi(n),$ which is sub-hom. The inequality \eqref{eq:10} becomes
\begin{equation}\label{eq:12}
(p+1)^{p-1}<p^p.
\end{equation}
\eqref{eq:12} may be written also as $\left(\cfrac{p+1}{p}\right)^p<p+1$ or $\left(1+\cfrac{1}{p}\right)^p<p+1.$ As $\left(1+\cfrac{1}{p}\right)^p<e<3\leq p+1$ for $p\geq 2,$ this is true. Therefore,
\begin{equation}\label{eq:13}
\sigma(n)^{\varphi(n)}<n^n \mbox{ for all } n\geq 2.
\end{equation}
\end{enumerate}
\end{ex}

\begin{megj}
Inequality \eqref{eq:13} appeared for the first time in \cite{4}, \cite{5}. There are many other inequalities of these types.
\begin{enumerate}
\item [b)] Other examples for Theorem 5: $$f(n)=d(n), \ g(n)=\cfrac{\sigma(n)}{d(n)};$$ $$f(n)=\cfrac{\sigma(n)}{d(n)}; \ g(n)=\sigma(n),$$ etc.
\end{enumerate}
\end{megj}

Now we shall find conditions for the sub-multiplicativity, $k$-sub-multiplicativity, sub-homogeneity, etc. properties.

\begin{theo}
Let $f\geq 0$ be a multiplicative function, $f(1)=1,$ and assume that for any prime numbers $p$ and any $a,b\geq 0$ one has the inequality
\begin{equation}\label{eq:14}
f(p^{a+b})\substack{\leq\\ (\geq)} f(p^a)\cdot f(p^b).
\end{equation}
Then $f$ is sub-mult (sup-mult) function. Reciprocally, if $f$ is sub-mult (sup-mult), then \eqref{eq:14} is true.
\end{theo}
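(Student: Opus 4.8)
The plan is to prove the stated equivalence by handling the two implications separately, observing that the converse is an immediate specialization while the forward implication is where the multiplicative structure actually does the work.

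For the converse, I would simply specialize the defining inequality \eqref{eq:2} (resp. \eqref{eq:3}). Taking $m=p^a$ and $n=p^b$ in $f(mn)\leq f(m)f(n)$ produces $f(p^{a+b})\leq f(p^a)f(p^b)$, which is exactly \eqref{eq:14}; the super-mult case is identical. No further argument is needed here, so the reciprocal statement is essentially free.

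For the forward implication, the key idea is to reduce the global inequality $f(mn)\leq f(m)f(n)$ to the per-prime inequalities \eqref{eq:14} through the canonical factorization. First I would write $m=\prod_p p^{\alpha_p}$ and $n=\prod_p p^{\beta_p}$, where the products range over all primes but only finitely many exponents are nonzero, so that $mn=\prod_p p^{\alpha_p+\beta_p}$. Since $f$ is multiplicative with $f(1)=1$, an induction on the number of distinct prime factors gives the product formulas $f(m)=\prod_p f(p^{\alpha_p})$, $f(n)=\prod_p f(p^{\beta_p})$, and $f(mn)=\prod_p f(p^{\alpha_p+\beta_p})$; here the convention $f(p^0)=f(1)=1$ lets every prime appear uniformly. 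Applying \eqref{eq:14} prime-by-prime, namely $f(p^{\alpha_p+\beta_p})\leq f(p^{\alpha_p})f(p^{\beta_p})$, and then multiplying these finitely many inequalities together yields $f(mn)\leq f(m)f(n)$. The super-mult case follows by reversing every inequality.

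The only real obstacle — and precisely the reason the hypothesis $f\geq 0$ is imposed — is the step of combining the separate prime-power inequalities into a single product inequality. Multiplying inequalities termwise preserves their direction only when all factors are nonnegative, so I would invoke nonnegativity explicitly at exactly that point; everything else is bookkeeping about the factorization and the multiplicativity of $f$.
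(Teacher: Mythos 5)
Your proposal is correct and follows essentially the same route as the paper: both arguments write $m$ and $n$ over a common set of primes with exponents allowed to be zero (using $f(1)=1$ for the convention $f(p^0)=1$), apply \eqref{eq:14} prime-by-prime, multiply the resulting inequalities (where $f\geq 0$ is needed), and obtain the converse by specializing to $m=p^a$, $n=p^b$. Your explicit remark on where nonnegativity enters is a point the paper leaves implicit, but the underlying argument is identical.
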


\begin{proof}
We may assume that two positive integers $m$ and $n$ has the same number of distinct prime factors, with exponents $\geq 0.$ Let $m=p_1^{a_1}\cdots p_r^{a_r},$ $n=p_1^{a'_1}\cdots p_r^{a'_r}$ with $p_1,\ldots p_r$ distinct primes, and $a_i\geq 0,$ $a'_i\geq 0.$ As $f$ is multiplicative, and $f(1)=1,$ clearly $f(m)=f(p_1^{a_1})\cdots f(p_r^{a_r}).$

Now, as by \eqref{eq:14} one has

\begin{equation}\label{eq:15}
f(p_i^{a_i+a'_i})\leq f(p_i^{a_i})\cdot f(p_i^{a'_i})
\end{equation}
after term-by term multiplication in \eqref{eq:15}, and using the multiplicativity of $f,$ we get
$$f(mn)\leq f(m)\cdot f(n),$$ i.e. $f$ is sub-multiplicative. Reciprocally, letting $m=p^a,$ $n=p^b,$ we get relation \eqref{eq:14}.

\begin{ex}
\begin{enumerate}
  \item [a)] Let $f(n)=d(n),$ multiplicative. As $d(p^a)=a+1$ for any prime $p$ and any $a\geq 0$ (as $d(1)=1$), \eqref{eq:14} becomes $a+b+1\leq (a+1)(b+1)=ab+a+b+1,$ i.e. $ab\geq 0;$ which is trivial. Thus, the function $d$ is sub-mult.

      A similar case is $f(n)=\sigma(n)$ (with a slightly more involved proof)
  \item [b)] Let $f(n)=\cfrac{\sigma(n)}{d(n)},$ which is multiplicative. When $a=0$ or $b=0,$ \eqref{eq:14} is trivial, so we may assume $a,b\geq 1.$ We will prove the inequality
\begin{equation}\label{eq:16}
\frac{p^{a+b+1}-1}{(p-1)(a+b+1)}\geq\frac{p^{a+1}-1}{(p-1)(a+1)}\cdot \frac{p^{b+1}-1}{(p-1)(b+1)}.
\end{equation}
\end{enumerate}
\end{ex}

For this reason, apply the Chebyshev integral inequality (see \cite{2})
\begin{equation}\label{eq:17}
\frac{\int_\alpha^\beta f(x)\mbox{d}x}{\beta-\alpha}\cdot \frac{\int_\alpha^\beta g(x)\mbox{d}x}{\beta-\alpha}\leq \frac{\int_\alpha^\beta f(x)g(x)\mbox{d}x}{\beta-\alpha}
\end{equation}
to the case $f(x)=x^a,$ $g(x)=x^b,$ $\alpha=1,$ $\beta=p.$ Since $f$ and $g$ are monotonic functions of the same type, \eqref{eq:17} holds true, so \eqref{eq:16} follows.

This proves that the function $f(n)=\cfrac{\sigma(n)}{d(n)}$ is sup-mult (but $\sigma$ and $d$ are both sub-mult).
\end{proof}

\begin{theo}
Let $f\geq 0$ be multiplicative, $f(1)=1,$ and assume that for any primes $p,$ and any $a,b\geq 0$ one has
\begin{equation}\label{eq:18}
\big(f(p^{a+b})\big)^k\substack{\leq\\ (\geq)}f(p^{ka})\cdot f(p^{kb})
\end{equation}
where $k\geq 2$ is fixed. Then $f$ is $k$-sub-mult ($k$-sup-mult) function.
\end{theo}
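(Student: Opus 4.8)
The plan closely mirrors the proof of Theorem 6, which handles the $k=1$ case. The key structural fact is that both multiplicativity and the $k$-sub-multiplicative condition \eqref{eq:6} interact well with prime factorizations, so the idea is to reduce the general inequality $\big(f(mn)\big)^k \leq f(m^k)f(n^k)$ to its prime-power building blocks, where \eqref{eq:18} provides exactly the needed estimate. First I would write two positive integers with a common list of primes (allowing zero exponents) as $m = p_1^{a_1}\cdots p_r^{a_r}$ and $n = p_1^{b_1}\cdots p_r^{b_r}$. Then $mn = p_1^{a_1+b_1}\cdots p_r^{a_r+b_r}$, so by multiplicativity of $f$ and $f(1)=1$ one has $f(mn) = \prod_{i=1}^r f\big(p_i^{a_i+b_i}\big)$, hence $\big(f(mn)\big)^k = \prod_{i=1}^r \big(f(p_i^{a_i+b_i})\big)^k$.

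Next I would compute the right-hand side in the same factored form. Since $m^k = p_1^{ka_1}\cdots p_r^{ka_r}$ and $n^k = p_1^{kb_1}\cdots p_r^{kb_r}$, multiplicativity gives $f(m^k) = \prod_{i=1}^r f\big(p_i^{ka_i}\big)$ and $f(n^k) = \prod_{i=1}^r f\big(p_i^{kb_i}\big)$, so that
\begin{equation}
f(m^k)f(n^k) = \prod_{i=1}^r f\big(p_i^{ka_i}\big)\cdot f\big(p_i^{kb_i}\big).
\end{equation}
Now the hypothesis \eqref{eq:18}, applied with $p=p_i$, $a=a_i$, $b=b_i$, yields the factor-wise inequality $\big(f(p_i^{a_i+b_i})\big)^k \leq f(p_i^{ka_i})\cdot f(p_i^{kb_i})$ for each $i$. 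Multiplying these $r$ inequalities term by term (legitimate because $f\geq 0$ keeps all factors nonnegative) gives $\big(f(mn)\big)^k \leq f(m^k)f(n^k)$, which is precisely \eqref{eq:6}, so $f$ is $k$-sub-mult. The super case is identical with reversed inequalities.

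I do not expect any serious obstacle here: the argument is a direct generalization of Theorem 6 and is essentially bookkeeping. The one point requiring a word of care is the term-by-term multiplication of inequalities, which needs nonnegativity of every factor; this is guaranteed by the standing assumption $f\geq 0$. A minor convention to state explicitly is that we may always pad the two factorizations to share the same prime list by inserting exponent-zero factors, using $f(p^0)=f(1)=1$. Unlike Theorem 6, this theorem as stated asserts only the forward implication, so no converse needs to be proved; if desired, a converse would follow exactly as before by specializing to $m=p^a$, $n=p^b$, but that is not required by the statement.
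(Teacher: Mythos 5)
Your proof is correct and is essentially the paper's own approach: the paper simply states that the proof is "very similar to the proof of Theorem 6" and omits it, and what you have written is precisely that adaptation (common prime lists with zero exponents padded via $f(1)=1$, factor-wise application of \eqref{eq:18}, and term-by-term multiplication justified by $f\geq 0$). You have in fact supplied more detail than the paper does, including the correct observation that no converse is asserted here.
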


\begin{proof}
This is very similar to the proof of Theorem 6, and we shall omit it.
\end{proof}

\begin{ex}
\mbox{}

\begin{enumerate}
  \item [a)] Let $f(n)=d(n),$ $k\geq 2.$ We will prove that $f$ is $k$-sup-mult (though, it is sub-mult, see Example 5 a)), i.e. the inequality
\begin{equation}\label{eq:19}
\big(d(p^{a+b})\big)^k \geq d(p^{ka})\cdot d(p^{kb}),
\end{equation}
i.e.
\begin{equation}\label{eq:20}
(a+b+1)^k\geq (ka+1)(kb+1).
\end{equation}

We shall prove inequality \eqref{eq:20} by induction upon $k.$ For $k=2$ it is true, as $(a+b+1)^2=a^2+b^2+1+2ab+2a+2b\geq (2a+1)(2b+1)=4ab+2a+2b+1,$ i.e. $a^2+b^2\geq 2ab,$ which is true by $(a-b)^2\geq 0.$

Assume that \eqref{eq:20} holds true for $k,$ and try to prove it for $k+1:$ $(a+b+1)^{k+1}\geq (ka+1)(kb+1)(a+b+1)\geq (ka+a+1)(kb+b+1).$

After some easy computations this becomes
$$k^2a^2b+k^2ab^2+ka^2+kb^2\geq ab,$$ which is true. Thus \eqref{eq:19} holds for any $k\geq 2.$
\end{enumerate}

\begin{megj}
For $k=2,$ the 2-sup-mult property of $d$ was first published in \cite{4}. For a recent rediscovery of this result, see  \cite{8}(Theorem 2.3).
\end{megj}

\begin{enumerate}
\item [b)] Let $f(n)=\varphi(n), k\geq 2.$ We will prove that $f$ is $k$-sub-mult, i.e.; \eqref{eq:18} holds true. We may assume $a,b\geq 1.$ As
$\varphi(p^m)=p^{m-1}\cdot (p-1)$ for $p$ prime, $m\geq 1;$ we have to show that
$$p^{k(a+b-1)}(p-1)^k\leq p^{ka-1}(p-1)p^{kb-1}(p-1).$$

This becomes
$$\left(1-\frac{1}{p}\right)^k\leq \left(1-\frac{1}{p}\right)^2,$$
which is true for any $k\geq 2,$ as $0<1-\cfrac{1}{p}<1.$
\end{enumerate}

\begin{megj}
For $k=2,$ the 2-sub-mult property of $\varphi$ was first discovered by T. Popoviciu \cite{3}.
\end{megj}
\end{ex}

\begin{theo}
Let $f\geq 0$ be multiplicative, $f(1)=1,$ and assume that
\begin{equation}\label{eq:21}
f(p^{a+b})\substack{\leq\\(\geq)} p^a\cdot f(p^b)
\end{equation}
holds true for any primes $p\geq 2$ and any $a,b\geq 0.$ Then $f$ is sub-hom (sup-hom) function.
\end{theo}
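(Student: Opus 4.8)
The plan is to follow the structure of the proof of Theorem 6, exploiting multiplicativity to reduce the global inequality to the prime-power hypothesis \eqref{eq:21}. First I would write $m$ and $n$ over a common list of primes, $m=p_1^{a_1}\cdots p_r^{a_r}$ and $n=p_1^{a'_1}\cdots p_r^{a'_r}$, where $p_1,\ldots,p_r$ are the distinct primes dividing $mn$ and the exponents $a_i,a'_i\geq 0$ are permitted to vanish (using $f(1)=1$ to absorb the trivial factors $f(p_i^0)=1$). Since $f$ is multiplicative, this gives $f(m)=\prod_i f(p_i^{a_i})$, $f(n)=\prod_i f(p_i^{a'_i})$, and $f(mn)=\prod_i f(p_i^{a_i+a'_i})$.

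Next I would apply the hypothesis \eqref{eq:21} to each factor, with the exponent $a_i$ of $m$ in the role of $a$ and the exponent $a'_i$ of $n$ in the role of $b$, obtaining $f(p_i^{a_i+a'_i})\leq p_i^{a_i}\,f(p_i^{a'_i})$ for each $i$. Multiplying these $r$ inequalities and regrouping the product yields
$$f(mn)\leq \left(\prod_{i=1}^r p_i^{a_i}\right)\left(\prod_{i=1}^r f(p_i^{a'_i})\right)=m\cdot f(n),$$
which is precisely the sub-homogeneity \eqref{eq:4}. The super-homogeneous case is identical with every inequality reversed.

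The one point deserving care --- and the genuine difference from Theorem 6 --- is the asymmetry between $m$ and $n$. In Theorem 6 both arguments entered through $f$, so the two factorizations were interchangeable; here the multiplier $m$ enters through the bare prime-power part $\prod_i p_i^{a_i}$, while $n$ enters through $\prod_i f(p_i^{a'_i})$. I must therefore consistently assign the exponents of $m$ to the $p^a$ slot of \eqref{eq:21} and those of $n$ to the $f(p^b)$ slot. Since the hypothesis is tailored to exactly this asymmetry, no work is needed beyond keeping the bookkeeping straight, and I expect no serious obstacle: once the common factorization is in place, the argument is a single term-by-term multiplication followed by the regrouping above.
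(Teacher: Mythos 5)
Your proposal is correct and is precisely the argument the paper intends: the paper's own proof of this theorem is just the remark ``similar to the proof of Theorem 6,'' and you have carried out exactly that reduction --- common prime factorization with exponents allowed to vanish, term-by-term application of the prime-power hypothesis, and multiplication using $f\geq 0$ and $f(1)=1$. Your explicit attention to the asymmetry (exponents of $m$ in the $p^a$ slot, those of $n$ in the $f(p^b)$ slot) is the only point where this differs from Theorem 6, and you handle it correctly.
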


\begin{proof}
Similar to the proof of Theorem 6.
\end{proof}

\begin{ex}
\begin{enumerate}
  \item [a)] Let $f(n)=\sigma(n).$ We will prove \eqref{eq:21} with "$\geq$" sign of inequality. If $b=0,$ $\sigma(p^a)\geq p^a$ is true. For $a=0$ we get equality. Assume $a,b\geq 1.$ Then \eqref{eq:21} becomes $1+p+\cdots + p^{a+b}\geq p^a(1+p+\cdots +p^b),$ which is true, as the left-hand side contains also the terms $p^a+p^{a+1}+\cdots + p^{a+b},$ which is the right side.
  \item [b)] For $f(n)=\varphi(n),$ we will prove \eqref{eq:21} with "$\leq$" sign (i.e. $\varphi$ will be sub-hom). As $\varphi(p^a)\leq p^a,$ \eqref{eq:21} holds for $b=0,$ while for $a=0$ one has equality. For $a,b\geq 1$ we have to prove
      $$p^{a+b-1}(p-1)\leq p^a\cdot p^{b-1}\cdot (p-1),$$ which holds true with equality.
\end{enumerate}
\end{ex}

\begin{theo}
Let $f\geq 0$ be multiplicative, $f(1)=1$ and assume that for any primes $p$ and any $a,b\geq 0$ one has
\begin{equation}\label{eq:22}
\big(f(p^{a+b})\big)^k\substack{\leq\\(\geq)} p^{ka}\cdot f(p^{kb}).
\end{equation}

Then $f$ is $k$-sub-hom ($k$sup-hom) function.
\end{theo}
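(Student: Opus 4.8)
The plan is to mimic the reduction-to-prime-powers argument used in the proof of Theorem 6 (which is also what Theorem 7 defers to). Since $f$ is multiplicative with $f(1)=1$, the whole statement should decouple across the distinct prime factors of $m$ and $n$ and then collapse onto the single-prime hypothesis \eqref{eq:22}. Recall that $k$-sub-homogeneity is precisely the relation \eqref{eq:8}, $\big(f(mn)\big)^k\leq m^k f(n^k)$, so that is the target.

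First I would write $m$ and $n$ over a common set of primes, allowing zero exponents: let $m=p_1^{a_1}\cdots p_r^{a_r}$ and $n=p_1^{b_1}\cdots p_r^{b_r}$ with $p_1,\ldots,p_r$ distinct and all $a_i,b_i\geq 0$. Then $mn=\prod_{i=1}^r p_i^{a_i+b_i}$, $m^k=\prod_{i=1}^r p_i^{ka_i}$, and $n^k=\prod_{i=1}^r p_i^{kb_i}$. Because the $p_i$ are distinct, the prime-power factors in each of these products are mutually coprime, so multiplicativity together with $f(1)=1$ gives $f(mn)=\prod_i f(p_i^{a_i+b_i})$ and $f(n^k)=\prod_i f(p_i^{kb_i})$.

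Next I would apply the hypothesis \eqref{eq:22} in the ``$\leq$'' form to each prime separately, obtaining $\big(f(p_i^{a_i+b_i})\big)^k\leq p_i^{ka_i}\,f(p_i^{kb_i})$ for every $i$. Since $f\geq 0$ both sides are nonnegative, which is exactly what licenses multiplying these $r$ inequalities together term by term. The left-hand side becomes $\prod_i\big(f(p_i^{a_i+b_i})\big)^k=\big(\prod_i f(p_i^{a_i+b_i})\big)^k=\big(f(mn)\big)^k$, while the right-hand side factors as $\big(\prod_i p_i^{a_i}\big)^k\cdot\prod_i f(p_i^{kb_i})=m^k f(n^k)$. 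This yields exactly $\big(f(mn)\big)^k\leq m^k f(n^k)$, i.e. relation \eqref{eq:8}, so $f$ is $k$-sub-hom; the ``sup-hom'' case is identical with all inequalities reversed.

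There is essentially no hard step here: once multiplicativity is invoked the argument is purely formal, and the only point needing a little care is the bookkeeping that $\big(\prod_i p_i^{a_i}\big)^k=\prod_i p_i^{ka_i}=m^k$ and that the factors $f(p_i^{kb_i})$ reassemble into $f(n^k)$. The boundary cases where some $a_i$ or $b_i$ vanishes cause no trouble, since \eqref{eq:22} is assumed for all $a,b\geq 0$ and $f(1)=1$. This uniformity with Theorems 6--8 is precisely why the author can again reasonably state that the proof parallels that of Theorem 6.
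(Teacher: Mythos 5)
Your proposal is correct and is exactly the argument the paper intends: its proof of this theorem is simply the deferral ``similar to the proof of Theorem 6,'' and your reduction to a common set of primes with zero exponents allowed, followed by applying \eqref{eq:22} prime-by-prime and multiplying the nonnegative inequalities term by term, is precisely that argument carried out explicitly. The bookkeeping $\prod_i p_i^{ka_i}=m^k$ and $\prod_i f(p_i^{kb_i})=f(n^k)$ is handled correctly, as is the nonnegativity needed to multiply the inequalities.
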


\begin{proof}
This is similar to the proof of Theorem 6.
\end{proof}

\begin{ex}
\begin{enumerate}
  \item [a)] Let $f(n)=\varphi(n).$ We will prove that $f$ is $k$-sub-hom. Inequality
\begin{equation}\label{eq:23}
\big(\varphi(p^{a+b})\big)^k\leq p^{ka}\cdot \varphi(p^{kb})
\end{equation}
holds true for $b=0,$ as $\varphi(p^a)\leq p^a.$ For $a=0$ the inequality $\big(\varphi(p^b)\big)^k\leq\varphi(p^{kb})$ holds true by $\varphi(\underbrace{p^b\cdots p^b}_{k})\geq\varphi(p^b)\cdots \varphi(p^b),$ using the sup-mult property of $\varphi.$ Let $a,b\geq 1.$ Then \eqref{eq:23} becomes
$$p^{k(a+b-1)}\cdot (p-1)^k\leq p^{ka}\cdot p^{kb-1}\cdot (p-1),$$ i.e.$\left(1-\cfrac{1}{p}\right)\leq 1-\cfrac{1}{p}.$ As $0<1-\cfrac{1}{p}<1$ and $k>1,$ this is true.
 \item [b)] $f(n)=\sigma(n)$ is $k$-sup-hom; and follows similarly from \eqref{eq:22}. We omit the details.

\end{enumerate}
\end{ex}

\end{document}